\newtheorem{lem}{Lemma}
\newtheorem{theorem}{Theorem}
\newtheorem{cor}{Corollary}
\newtheorem{remark}{Remark}
\begin{document}
\title{On a Subset Metric}

\bibliographystyle{plain}
\author{Richard Castro}
\address{
Richard Castro\\
Department of Mathematics and Statistics \\
San Diego State University\\
5500 Campanile Drive, San Diego, 92182, CA,  USA}

\email{rcastro0899@sdsu.edu}

\author{Zhibin Chang}
\address{
Zhibin Chang\\
Department of Mathematics and Statistics \\
San Diego State University\\
5500 Campanile Drive, San Diego, 92182, CA,  USA}

\email{zchang@sdsu.edu}

\author{Ethan Ha}
\address{
Ethan Ha\\
Department of Mathematics and Statistics \\
San Diego State University\\
5500 Campanile Drive, San Diego, 92182, CA,  USA}

\email{ethankaweiha@gmail.com}

\author{Evan Hall}
\address{
Evan Hall\\
Department of Mathematics and Statistics \\
San Diego State University\\
5500 Campanile Drive, San Diego, 92182, CA,  USA}

\email{elhall@sdsu.edu }

\author{Hiren Maharaj}
\address{
Hiren Maharaj\\
Department of Mathematics and Statistics \\
San Diego State University\\
5500 Campanile Drive, San Diego, 92182, CA,  USA}

\email{hmaharaj@sdsu.edu}

\begin{abstract}
 For a bounded metric space $X$, we define a metric on the set of all finite subsets of $X$.  This generalizes the  sequence-subset distance introduced by Wentu Song, Kui  Cai and  Kees A. Schouhamer Immink \cite{cai} to study error correcting codes for DNA based data storage.    This work also complements the work of Eiter and Mannila \cite{EiterMannila} where they study extensions of distance functions to subsets of a space in the context of various applications.
\end{abstract}

\maketitle

%
%
%

\section{Introduction}\label{sec1}

To design error correcting codes for DNA storage channels, a new metric, called the sequence-subset distance,  was introduced
in \cite{cai}. This metric generalizes the Hamming distance to a distance function defined between any two sets of unordered vectors.  
 The definition is as follows. Let $\mathbb{A}$ be a fixed finite alphabet and $L\ge 1$ an integer. For any $x_1, x_2 \in \mathbb{A}^L$, 
the Hamming distance $d_H(x_1, x_2)$ between $x_1$ and $x_2$ is the number of coordinates in which $x_1$ and $x_2$ differ. For two subsets $X_1, X_2 \subset \mathbb{A}^L$, with $\vert X_1\vert  \le \vert X_2\vert $, and any injection $\chi: X_1 \to X_2$, the $\chi-$ distance between $X_1$ and $X_2$ is defined to be 
\begin{equation}\label{caiequation}
d_\chi(X_1, X_2) = \sum_{x\in X_1} d_H(x, \chi(x))  + L(\vert X_2\vert -\vert X_1\vert ).
\end{equation} The sequence-subset distance between $X_1$ and $X_2$ is defined to be 
\begin{equation*}
d_S(X_1, X_2) = d_S(X_2,X_1) = \min\{ d_\chi(X_1, X_2) \vert  \chi: X_1 \to X_2  \text{ is an  injection} \}.
\end{equation*}
In \cite{cai} it is shown that $d_S$ is in fact a metric on the set of subsets of $\mathbb{A}^L$.

In this note we generalize the sequence-subset distance as follows. 
Let $X$ be a  bounded metric space. For each $y \in X$, let 
 $M: X\to \mathbb{R}$ be a function such that 
\begin{equation}\label{condition}
d(x,y) \le M(x) \le d(x,z) + M(z)
\end{equation} for all $x,y, z\in X$. 
Put $Y := \mathcal{F}(X)$, the set of all finite subsets of $X$. 
 For $A,B \in Y$, with $\vert A\vert  \leq \vert B\vert $, and any injection $\chi: A \to  B$, the $\chi-$ distance between
 $A$ and $B$ to defined to be  $$d_{\chi}(A,B) :=  \sum_{x \in A} d(x, \chi(x)) + \sum_{y\in B\setminus \chi(A)} M(y).$$ Now the distance between $A$ and $B$ is  defined to be  \begin{equation}\label{mainmetric}
 d_S(A,B) = d_S(B,A) :=  \min\{d_{\chi}(A,B)\vert \ \chi: A \to  B\ \text{is an injection}\}.
 \end{equation}
We show in Section \ref{sec2} that $d_S$ is indeed a metric on $\mathcal{F}(X)$. We will refer to this distance function simply as a subset metric. 

There is some flexibility in the choice of the function $M$.
Since $X$ is a bounded metric space, we can select the function $M$ to have constant value $D := \sup\{ d(x,y): x,y\in X\}$. 
In the case of the Hamming metric $d= d_H$ on $X= \mathbb{A}^L$, this is tantamount to choosing $M(y)$ 
to be the constant $L$ for all $y\in X$ and the subset-sequence metric of \cite{cai} is recovered.  In fact $M$ could be be any constant valued function whose value is an upper bound for the metric $d$ on $X$. 
Alternatively, one could define $M$ as follows: for each $x\in X$, let 
\begin{equation}\label{lowM}
M(x) = \sup\{d(x,y): y\in X\}.
\end{equation} Condition (\ref{condition}) is satisfied: 
for all $y\in X$,   $d(x,y) \le d(x,z) + d(z,y) \le d(x,z) + M(z)$ whence $M(x) \le d(x,z)  + M(z)$.

As for the sequence-subset distance of  \cite{cai}, the subset distance between $A$ and $B$ can be computed from a minimum weight perfect matching of the bipartite graph whose partite sets are $A$ and $B$;  the edge joining $a\in A$ with $b\in B$ is assigned weight $d(a,b)$. The  Kuhn-Munkres algorithm does this in time $O(\vert B\vert ^3)$ \cite{munkres}. 

The generalized metric could potentially have more applications. For example, take $X$ to   be the vertex set of a finite connected graph and $d(x,y)$ the length of the shortest  path between $x$ and $y$. Then $d_S$ is a metric on the power set $2^X$ and provides a measure of  distance between collections of vertices. 

Another example is image recognition. In this case take $X$ to be  a bounded subset of the standard Euclidean plane (for example, corresponding to a raster of pixels). For simplicity we take $X=[0,1]\times[0,1]$ the unit square as an example and $d(p,q) = \vert \vert p-q\vert \vert $ is the standard Euclidian distance. Each finite subset of $X$ would correspond to an image. Using (\ref{lowM}) to define the function 
$M(p)$,  we have  
$M(p) : = \max\{\vert \vert p-c_1\vert \vert , \vert \vert p-c_2\vert \vert , \vert \vert p-c_3\vert \vert , \vert \vert p-c_4\vert \vert  \}$ where $c_1, c_2, c_3, c_4$ are the four corners of $X$. Alternatively,   $M$ could be replaced by the constant function whose value is  $D= \sqrt{2}$.  

Distance functions between subsets of a metric space and also measure spaces have been widely studied, see \cite{conci} for a survey of such distances; see also \cite{ency}. One of the most widely used subset metrics is the Hausdorff metric \cite{conci}. This metric has many variations, but we state
one version for comparison. Let $X$ be a bounded metric space with metric $d$. For non-empty compact subsets $A, B$ of $X$, 
define $$h(A,B) := \max\{ \max_{a\in A} d(a,B), \max_{b\in B} d(b,A) \}$$
where $d(a,B) := \min_{a\in A} d(x,a)$ and $d(b,A)$ is defined likewise. The function $h$ gives a metric on the set of all compact subsets of 
$X$
that generalizes $d$: $h(\{a\},\{b\}) = d(a,b)$ for all $a,b\in X$.
If $X$ is finite, the Hausdorff metric is computable in polynomial time and does have theoretical benefits, for example, it is complete if $X$ is complete with respect to $d$.  However, as pointed out in 
\cite{EiterMannila}, it may not be appropriate for some applications since the metric does not take into 
account the entire configuration of some finite sets. On the other hand, the subset-sequence metric formulated in  \cite{cai} for the purpose of comparing  of  DNA sequences
provides a finer comparison between two collections of sequences and is thus  a more appropriate distance measure in that situation. Each term involving $L$ on the right side of  (\ref{caiequation}) expresses a natural worst case weight for a DNA strand that is too far away from the other set. 
While the authors of this work were primarily motivated by generalizing the work of \cite{cai}, this work also complements that of 
\cite{EiterMannila} where they study extensions of distance measures to subsets more generally. For comparison, we briefly recall some of the main results from \cite{EiterMannila}. A distance function  $\Delta$ on a non-empty set $B$ is one that satisfies all of the axioms to be a metric, except possibly the triangle inequality. 
 In \cite{EiterMannila}, the authors consider the problem of extending a distance function to the set of non-empty finite subsets of $B$. They also discuss algorithms for computing such extensions. 
 To measure a distance between two non-empty subsets $S_1, S_2$ of $B$, they discuss four distance functions: the sum of minimum distances \cite{Niini} $$d_{md}(S_1, S_2) := \frac{1}{2} \left ( \sum_{e\in S_1} \Delta(e, S_2) +  \sum_{e\in S_2} \Delta(e, S_1)  \right ),$$ 
the surjective distance $$d_s(S_1, S_2) :=  \min_\eta \sum_{(e_1, e_2) \in \eta } \Delta(e_1, e_2) $$ where the minimum is over all surjections $\eta$ from the larger set  to the smaller set  (due to  G. Oddie in \cite{Oddie}), the Fair surjection distance 
$$d_{fs}(S_1, S_2) :=  \min_\eta \sum_{(e_1, e_2) \in \eta } \Delta(e_1, e_2) $$ where the minimum is over all {\em fair} surjections $\eta$ from the larger set  to the smaller set (a surjection $\eta:S_1 \to S_2$ is  called fair if $\left \lvert \vert \eta^{-1}(x)\vert  - \vert  \eta^{-1}(y)\vert  \right \rvert  \le 1$ for all $x,y\in S_1$; this is also due to G. Oddie in \cite{Oddie}) and they introduce the Link distance 
$$d_l(S_1, S_2) :=  \min_R \sum_{(e_1, e_2) \in R } \Delta(e_1, e_2) $$ where the minimum is over all linking relations $R$ between $S_1$ and $S_2$ (a subset $R\subset S_1\times S_2$ is called a linking relation if for all $e_1\in S_1$, there exists $e_2\in S_2$ such that $(e_1, e_2) \in R$ and also if for all $e_2\in S_2$, there exists $e_1\in S_1$ such that $(e_1, e_2) \in R$).   While they show that these distance functions fail to be a metric in the case that $B$ is a finite subset of the integral plane and $\Delta$ is  the Manhattan metric,  
Eiter and Mannila  present an elegant construction, called the  metric infimum method, that produces a metric $\Delta^\omega$ from a given distance function $\Delta$. Interestingly, they demonstrate that $d_s^\omega = d_{fs}^\omega = d_l^\omega$. The authors in \cite{EiterMannila} argue that the link metric is very intuitive in some contexts. It would interesting to also study this metric in the context of error correcting codes for DNA data storage.  

The rest of the paper is devoted to proving that (\ref{mainmetric}) is indeed a metric.

\section{Proofs}\label{sec2}

Thoughout this section $X$ is a bounded metric space with metric $d$,  the  function $M:X\to \mathbb{R}$ is one that satisfies the condition 
(\ref{condition}), $d_S$ is the function defined  by (\ref{mainmetric}) and  $\mathcal{F}(X)$
is the set of all finite subsets of $X$. 
In this section we prove that the  function $d_S$ is  a metric on $\mathcal{F}(X)$.
While the main steps followed here are inspired by \cite{cai}, there are differences to account for the presence of the function $M$ in the definition of $d_S$.

\begin{lem} \label{lemma1} For any $X_1,X_2 \in \mathcal{F}(X)$, such that $\vert X_1\vert  \leq \vert X_2\vert $, there exists an injection $\chi_0: X_1 \to  X_2$, such that $d_S(X_1,X_2) = d_{\chi_0}(X_1,X_2)$ and $\chi_0(x) = x$ for all $x \in X_1 \cap X_2$.
\end{lem}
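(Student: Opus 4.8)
The plan is to take an injection that already attains the minimum in (\ref{mainmetric}) and then ``repair'' it one common point at a time, never increasing the $\chi$-distance. Since $X_1$ and $X_2$ are finite there are only finitely many injections $X_1 \to X_2$, so the minimum defining $d_S(X_1,X_2)$ is attained. Among all injections $\chi$ with $d_\chi(X_1,X_2) = d_S(X_1,X_2)$, I would choose one for which the set $\mathrm{Fix}(\chi) := \{x \in X_1 \cap X_2 : \chi(x) = x\}$ is as large as possible, and claim that $\mathrm{Fix}(\chi) = X_1 \cap X_2$; then $\chi_0 := \chi$ works. Suppose for contradiction that there is $x \in X_1 \cap X_2$ with $\chi(x) \ne x$. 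I would modify $\chi$ near $x$ to obtain an optimal injection $\chi'$ with $\mathrm{Fix}(\chi') \supsetneq \mathrm{Fix}(\chi)$, contradicting maximality. The modification splits into two cases according to whether $x$ lies in the image $\chi(X_1)$.

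First, if $x \notin \chi(X_1)$, let $\chi'$ agree with $\chi$ except that $\chi'(x) = x$; this is still an injection since $x$ was not already an image value. Passing from $\chi$ to $\chi'$ changes the distance only by replacing $d(x,\chi(x))$ with $d(x,x)=0$ in the first sum, deleting $M(x)$ from the second sum (now $x \in \chi'(X_1)$), and inserting $M(\chi(x))$ (now $\chi(x) \notin \chi'(X_1)$). Hence $d_{\chi'}(X_1,X_2) - d_\chi(X_1,X_2) = M(\chi(x)) - M(x) - d(x,\chi(x))$, which is $\le 0$ by the second inequality in Condition (\ref{condition}) applied as $M(\chi(x)) \le d(\chi(x),x) + M(x)$, using $d(\chi(x),x) = d(x,\chi(x))$. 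By optimality of $\chi$ this difference is exactly $0$, so $\chi'$ is optimal, and $\mathrm{Fix}(\chi') = \mathrm{Fix}(\chi) \cup \{x\}$.

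If instead $x \in \chi(X_1)$, write $x = \chi(w)$ with $w \in X_1$; note $w \ne x$, since otherwise $\chi(x)=x$. Let $\chi'$ agree with $\chi$ except that $\chi'(x) = x$ and $\chi'(w) = \chi(x)$; this merely swaps the two image values assigned to $x$ and $w$, so $\chi'$ is an injection with $\chi'(X_1) = \chi(X_1)$, and the second sum is unchanged. In the first sum, $d(x,\chi(x)) + d(w,x)$ is replaced by $d(x,x) + d(w,\chi(x)) = d(w,\chi(x))$, so $d_{\chi'}(X_1,X_2) - d_\chi(X_1,X_2) = d(w,\chi(x)) - d(x,\chi(x)) - d(w,x) \le 0$ by the triangle inequality. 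Again optimality forces equality; since $\chi$ was changed only at $x$ and $w$, neither of which lies in $\mathrm{Fix}(\chi)$, we get $\mathrm{Fix}(\chi') \supseteq \mathrm{Fix}(\chi) \cup \{x\}$. In either case the maximality of $\mathrm{Fix}(\chi)$ is contradicted, so $\mathrm{Fix}(\chi) = X_1 \cap X_2$ and the lemma follows with $\chi_0 = \chi$.

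The inequalities here are immediate from Condition (\ref{condition}) and the triangle inequality; the part that needs care — and the only place an error could slip in — is the bookkeeping of which $d(\cdot,\cdot)$ and $M(\cdot)$ terms are created, destroyed, or preserved under each repair, together with verifying that each repaired map is still injective and does not un-fix a previously fixed common point. Once that bookkeeping is carried out honestly, the extremal choice of $\chi$ converts ``the distance does not increase'' into the contradiction that drives the argument.
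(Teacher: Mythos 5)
Your proof is correct and uses essentially the same two repair moves as the paper: redefining $\chi$ at a common point outside the image (controlled by Condition (\ref{condition})) and swapping image values when the common point is already hit (controlled by the triangle inequality), with identical bookkeeping. The only difference is organizational — you invoke an extremal choice maximizing $\mathrm{Fix}(\chi)$ where the paper iterates the repairs, which is a slightly cleaner way to handle termination but not a different argument.
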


\begin{proof}
If $X_1 \cap X_2 = \emptyset$, then the statement is vacuously true. Suppose that $X_1\cap X_2 \ne \emptyset$.
Choose $\chi:X_1\to  X_2$ such that $d_S(X_1,X_2) = d_\chi(X_1,X_2)$. 
The proof will be in two parts. First we show that, if necessary,  $\chi$ can be redefined on $X_1 \cap X_2$ so that $d_S(X_1, X_2) = d_\chi(X_1,X_2)$ and
 $X_1\cap X_2$ is contained in   the image of $\chi$. Next we will show that $\chi$ can be further adjusted to have the desired properties.
 
 Suppose
 that some $x_0 \in X_1 \cap X_2$ does  not belong to the image of $\chi$. Then we  redefine $\chi$ at $x_0$  to form a new embedding
$\nu:  X_1 \to  X_2$ by setting 
$$\nu(x) = \left \{  \begin{array}{ll} \chi(x) & \hbox{ if } x\ne x_0 \\
x_0 & \hbox{ if } x= x_0. \end{array} \right. $$
By definition $d_S(X_1, X_2) \le d_\nu(X_1,X_2)$. 
Note that $\nu(X_1) = \left ( \chi(X_1) \setminus \{ \chi(x_0) \} \right )  \cup \{ x_0 \}$ and 
\begin{equation}\label{secondhalf}
\sum_{x\in X_1} d(x, \nu(x)) =  \sum_{x\in X_1} d(x, \chi(x))   \, \,   - d(x_0, \chi(x_0)).
\end{equation}
Since $x_0\in X_2 \setminus \chi(X_1)$,  $\chi(x_0) \not \in X_2 \setminus \chi (X_1)$ and $\chi(x_0) \in  X_2\setminus \nu(X_1)$, it follows that 
\begin{equation}\label{firsthalf}
\sum_{y \in X_2\setminus \nu(X_1) } M(y) = \sum_{y\in X_2\setminus \chi(X_1) } M(y)  - M(x_0)  + M(\chi(x_0)).
\end{equation}
Combining (\ref{secondhalf}) and (\ref{firsthalf}), we get that 
$$d_\nu(X_1,X_2)  = d_\chi(X_1, X_2) +M(\chi(x_0)) - M(x_0) - d(x_0, \chi(x_0)).$$
From the condition (\ref{condition}), it follows that $d_\nu(X_1,X_2) \le d_\chi(X_1,X_2) = d_S(X_1, X_2)$.
Thus $d_S(X_1, X_2) = d_\nu(X_1, X_2)$ and $\nu(x_0) = x_0$. By repeatedly applying the above procedure we will obtain an embedding of $X_1$ into $X_2$, which we also call $\chi$, with the property that 
 $X_1\cap X_2 \subseteq Im(\chi)$.

Let $x_1\in X_1 \cap X_2$. Next we  show that if $\chi(x_1) \ne x_1$ then  we can adjust the embedding $\chi$ 
to form a new embedding $\mu:X_	1\to  X_2$ such that we have $\mu(x_1)=x_1$ and still have that $d_S(X_1,X_2) = d_\mu(X_1,X_2)$. 
 From above we know that there exists $z\in X_1$ such that 
$\chi(z) = x_1$. Put $y= \chi(x_1)$ and 
define
$$\mu(x) = \left \{  \begin{array}{ll} \chi(x) & \hbox{ if } x\ne x_1, z \\
x_1 & \hbox{ if } x= x_1 \\
y &   \hbox{ if } x= z. 
\end{array} \right. $$

Then $\mu : X_1 \to  X_2$ is an injection and, by the definition of the subset distance,  $d_S(X_1,X_2) \le d_\mu(X_1,X_2)$.
Also we have that 
\begin{eqnarray*}
d_\chi (X_1,X_2)  & = &    d(x_1,y) + d(z,x_1) + \left (   d_\mu (X_1,X_2) -d(x_1,x_1)-d(z,y)  \right)  \\
& = &  d_\mu (X_1,X_2) + d(x_1,y) + d(z,x_1)-d(z,y) \\
&\ge &  d_\mu (X_1,X_2) 
\end{eqnarray*} where the last inequality follows from the triangle inequality.  Thus $d_S(X_1, X_2)  \ge d_\mu(X_1,X_2)$
and we see that $d_S(X_1,X_2) = d_{\chi}(X_1,X_2) = d_\mu(X_1,X_2)$ and $\mu(x_1) = x_1$.
By repeated application of the above procedure, we obtain an embedding with the desired property. 
\end{proof}

\begin{cor}
 For any $X_1, X_2 \in \mathcal{F}(X)$, $$d_S(X_1,X_2) = d_S(X_1 \setminus X_2, X_2 \setminus X_1).$$
\end{cor}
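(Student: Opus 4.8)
The plan is to deduce this directly from Lemma \ref{lemma1}. Since $d_S$ is symmetric by definition, we may assume $\vert X_1\vert \le \vert X_2\vert$, which is equivalent to $\vert X_1\setminus X_2\vert \le \vert X_2\setminus X_1\vert$ because $\vert X_1\vert = \vert X_1\setminus X_2\vert + \vert X_1\cap X_2\vert$ and $\vert X_2\vert = \vert X_2\setminus X_1\vert + \vert X_1\cap X_2\vert$. I will prove the two inequalities $d_S(X_1,X_2) \ge d_S(X_1\setminus X_2, X_2\setminus X_1)$ and $d_S(X_1,X_2) \le d_S(X_1\setminus X_2, X_2\setminus X_1)$ separately, the first by exhibiting a good injection between the reduced sets from an optimal injection $X_1\to X_2$, and the second by extending an arbitrary injection between the reduced sets back to an injection $X_1\to X_2$.

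For the inequality $d_S(X_1,X_2) \ge d_S(X_1\setminus X_2, X_2\setminus X_1)$, I would invoke Lemma \ref{lemma1} to choose an injection $\chi_0 : X_1 \to X_2$ with $d_S(X_1,X_2) = d_{\chi_0}(X_1,X_2)$ and $\chi_0(x)=x$ for all $x\in X_1\cap X_2$. The key observation is that $\chi_0$ carries $X_1\setminus X_2$ into $X_2\setminus X_1$: if some $x\in X_1\setminus X_2$ had $\chi_0(x)\in X_1\cap X_2$, then $\chi_0(\chi_0(x)) = \chi_0(x)$ would force $x=\chi_0(x)$ by injectivity, contradicting $x\notin X_2$. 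Hence $\psi := \chi_0|_{X_1\setminus X_2}$ is an injection $X_1\setminus X_2 \to X_2\setminus X_1$. Splitting $X_1 = (X_1\cap X_2)\sqcup(X_1\setminus X_2)$, the terms $d(x,\chi_0(x))$ with $x\in X_1\cap X_2$ vanish, so $\sum_{x\in X_1} d(x,\chi_0(x)) = \sum_{x\in X_1\setminus X_2} d(x,\psi(x))$; and since $\chi_0(X_1) = (X_1\cap X_2)\cup\psi(X_1\setminus X_2)$, we get $X_2\setminus\chi_0(X_1) = (X_2\setminus X_1)\setminus\psi(X_1\setminus X_2)$, so the two $M$-sums agree as well. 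Therefore $d_{\chi_0}(X_1,X_2) = d_\psi(X_1\setminus X_2, X_2\setminus X_1) \ge d_S(X_1\setminus X_2, X_2\setminus X_1)$.

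For the reverse inequality, I would take an arbitrary injection $\psi : X_1\setminus X_2 \to X_2\setminus X_1$ and extend it to $\chi : X_1 \to X_2$ by $\chi = \psi$ on $X_1\setminus X_2$ and $\chi(x)=x$ for $x\in X_1\cap X_2$. Because $\psi(X_1\setminus X_2)\subseteq X_2\setminus X_1$ is disjoint from $X_1\cap X_2$, this $\chi$ is a well-defined injection, and the same bookkeeping as above gives $d_\chi(X_1,X_2) = d_\psi(X_1\setminus X_2, X_2\setminus X_1)$. Hence $d_S(X_1,X_2)\le d_\psi(X_1\setminus X_2, X_2\setminus X_1)$ for every such $\psi$, and minimizing over $\psi$ yields $d_S(X_1,X_2)\le d_S(X_1\setminus X_2, X_2\setminus X_1)$. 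Combining the two inequalities gives the claim.

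I do not expect a genuine obstacle here. The only point requiring care is the claim that the optimal injection supplied by Lemma \ref{lemma1} actually sends $X_1\setminus X_2$ into $X_2\setminus X_1$, rather than merely into $X_2$; this is precisely where injectivity is combined with the fixed-point property $\chi_0|_{X_1\cap X_2}=\mathrm{id}$. (The degenerate cases in which one of the reduced sets is empty are covered automatically, the relevant injections being empty.)
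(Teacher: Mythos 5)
Your proposal is correct and matches the paper's approach: the paper's own proof is just the one-line remark that the corollary is ``a direct consequence of Lemma \ref{lemma1} and the definition of $d_{\chi}(\cdot,\cdot)$,'' and your argument is precisely the careful elaboration of that remark. In particular, your observation that injectivity plus the fixed-point property forces the optimal $\chi_0$ to map $X_1\setminus X_2$ into $X_2\setminus X_1$ is exactly the detail the paper leaves implicit.
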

\begin{proof}
This is a direct consequence of Lemma \ref{lemma1}  and the definition of $d_{\chi}(\cdot,\cdot)$.
\end{proof}

\begin{lem} \label{simple}
Suppose that $X_1, X_2  \in \mathcal{F}(X)$ with $\vert X_1\vert \le \vert X_2\vert $. Then for any $b\in X$, 
$d_S(X_1,X_2) \le d_S(X_1, X_2\cup \{ b \})$. 
\end{lem}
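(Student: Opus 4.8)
The plan is to first reduce to the case $b\notin X_2$, since otherwise $X_2\cup\{b\}=X_2$ and the inequality is an equality. Assuming $b\notin X_2$, note that $|X_1|\le|X_2|<|X_2\cup\{b\}|$, so by Lemma \ref{lemma1} (or just by definition) there is an injection $\chi_0:X_1\to X_2\cup\{b\}$ with $d_S(X_1,X_2\cup\{b\})=d_{\chi_0}(X_1,X_2\cup\{b\})$. The goal is then to produce an injection $X_1\to X_2$ whose $\chi$-distance is $\le d_{\chi_0}(X_1,X_2\cup\{b\})$, which gives $d_S(X_1,X_2)\le d_S(X_1,X_2\cup\{b\})$.

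I would split into two sub-cases according to whether $b\in\chi_0(X_1)$. If $b\notin\chi_0(X_1)$, then $\chi_0$ already maps $X_1$ into $X_2$, and comparing the two definitions of $d_\chi$ directly gives $d_{\chi_0}(X_1,X_2\cup\{b\})=d_{\chi_0}(X_1,X_2)+M(b)\ge d_{\chi_0}(X_1,X_2)\ge d_S(X_1,X_2)$, where I use $M(b)\ge 0$; this last fact follows from condition (\ref{condition}) by taking $y=x$, so that $0=d(x,x)\le M(x)$.

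If instead $b\in\chi_0(X_1)$, let $z\in X_1$ be the unique point with $\chi_0(z)=b$. Then $\chi_0$ maps $X_1\setminus\{z\}$ injectively into $X_2$, and since $|X_1\setminus\{z\}|=|X_1|-1\le|X_2|-1$, the set $X_2\setminus\chi_0(X_1\setminus\{z\})$ is nonempty; choose $c$ in it and define $\chi_1:X_1\to X_2$ by $\chi_1(x)=\chi_0(x)$ for $x\ne z$ and $\chi_1(z)=c$ (note $c\ne b$ and $c\notin\chi_0(X_1\setminus\{z\})$, so $\chi_1$ is an injection into $X_2$). A careful bookkeeping of which points of $X_2\cup\{b\}$, respectively $X_2$, are missed by $\chi_0$, respectively $\chi_1$ — identical in spirit to the computations in the proof of Lemma \ref{lemma1} — yields
$$d_{\chi_0}(X_1,X_2\cup\{b\})-d_{\chi_1}(X_1,X_2)=d(z,b)+M(c)-d(z,c).$$
So the lemma reduces to showing this quantity is nonnegative, i.e. $d(z,c)\le d(z,b)+M(c)$, and then $d_S(X_1,X_2)\le d_{\chi_1}(X_1,X_2)\le d_{\chi_0}(X_1,X_2\cup\{b\})=d_S(X_1,X_2\cup\{b\})$.

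The one genuinely non-routine point is this last inequality: it is not the triangle inequality by itself, but rather the triangle inequality $d(z,c)\le d(z,b)+d(b,c)$ combined with the defining property $d(b,c)\le M(c)$ coming from condition (\ref{condition}) (with the roles "$x$"$=c$, "$y$"$=b$). Everything else is mechanical accounting of the two sums defining $d_{\chi_0}$ and $d_{\chi_1}$.
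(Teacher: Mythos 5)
Your proof is correct and follows essentially the same route as the paper's: split on whether $b$ lies in the image of an optimal injection $\chi_0$, and in the nontrivial case redirect the preimage $z$ of $b$ to an unused point $c\in X_2$ and compare the two $\chi$-distances. The only cosmetic difference is at the final step, where the paper gets $d(z,b)+M(c)-d(z,c)\ge 0$ directly from $d(z,c)\le M(c)$ (the left half of condition (\ref{condition})), with no need for the triangle inequality through $b$.
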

\begin{proof}
Suppose $\chi: X_1\to  X_2\cup\{b\}$ such that $d_S(X_1, X_2\cup\{b\}) = d_\chi(X_1, X_2\cup \{b\} )$.
If $\chi(X_1) \subseteq X_2$, then $d_\chi(X_1, X_2\cup \{ b \} ) = d_\chi(X_1, X_2) + M(b) \ge d_S(X_1,X_2)  + M(b) \ge d_S(X_1, X_2)$. 
If $\chi(X_1) \not \subset X_2$, then $\chi(a) =b$ for some $a\in X_1$ and $\vert X_2\vert > \vert X_1\vert $. Fix $c \in X_2\setminus \chi(X_1)$
and define $\eta: X_1 \to X_2$ by
$$
\eta(x) = \left \{ \begin{array}{ll} 
\chi(x) & \hbox{if } x\ne a \\
c & \hbox{if } x =a.
\end{array} \right.
$$ 
Then
$\eta(X_1) = \left (  \chi(X_1)\setminus \{ b\} \right ) \cup \{ c \} $ so 
$X_2 \cup \{ b \} \setminus \chi(X_1)$ is the disjoint union $ \left ( X_2 \setminus \eta(X_1) \right )  \cup \{ c \}$
 and
\begin{eqnarray*}
& & d_S(X_1, X_2\cup \{b\} ) \\
&=& d_\chi(X_1, X_2\cup \{b\}) \\
&=& \sum_{x \in X_1} d(x, \chi(x) )  + \sum_{y \in X_2 \cup \{ b \} \setminus \chi(X_1)} M(y) \\
&=& d(a,b) +  \sum_{x \in X_1 } d(x, \eta(x) )   - d(a,c) +  \sum_{y \in X_2 \setminus \eta(X_1)   } M(y)   + M(c) \\
&=& d(a,b) +M(c) - d(a,c)  +  \sum_{x \in X_1} d(x, \eta(x) )   + \sum_{y \in X_2 \setminus \eta(X_1)  } M(y) \\
&=& d(a,b) +M(c) - d(a,c) + d_\eta(X_1, X_2) \\
&\ge& d_\eta(X_1, X_2) \ge d_S(X_1, X_2)
\end{eqnarray*}
since $d(a,c) \le M(c)$ by condition (\ref{condition}).
\end{proof}
By repeated application of the above result, we obtain the following corollary. 
\begin{cor} \label{grow}
For any $X_1, X_2 \in \mathcal{F}(X)$, such that $\vert X_1\vert  \leq \vert X_2\vert $. Suppose that $X_2' \subseteq X_2$ such that $\vert X_1\vert  \leq \vert X_2'\vert $. Then $$d_S(X_1,X_2') \leq d_S(X_1,X_2).$$ 
\end{cor}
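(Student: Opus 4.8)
The plan is to deduce Corollary \ref{grow} from Lemma \ref{simple} by a routine induction on the size of $X_2 \setminus X_2'$, putting back the missing elements one at a time and tracking the inequality at each step. If $X_2' = X_2$ the statement is trivial, so assume otherwise.

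First I would enumerate the elements of $X_2 \setminus X_2'$ as $b_1, \ldots, b_k$, where $k = \vert X_2 \vert - \vert X_2' \vert \ge 1$, and define an increasing chain of subsets by $Z_0 := X_2'$ and $Z_i := Z_{i-1} \cup \{ b_i \}$ for $1 \le i \le k$, so that $Z_k = X_2$ and, since the $b_i$ are distinct and none lies in $X_2'$, we have $\vert Z_0 \vert < \vert Z_1 \vert < \cdots < \vert Z_k \vert$. Then, since $\vert X_1 \vert \le \vert X_2' \vert = \vert Z_0 \vert \le \vert Z_{i-1} \vert$ for every $i$, the hypothesis of Lemma \ref{simple} is met for the pair $(X_1, Z_{i-1})$, and applying that lemma with $b = b_i$ gives $d_S(X_1, Z_{i-1}) \le d_S(X_1, Z_{i-1} \cup \{ b_i \}) = d_S(X_1, Z_i)$. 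Chaining these inequalities from $i = 1$ to $i = k$ yields $d_S(X_1, X_2') = d_S(X_1, Z_0) \le d_S(X_1, Z_k) = d_S(X_1, X_2)$, which is the claim.

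There is essentially no serious obstacle here; the only point needing (minor) attention is to verify that the cardinality hypothesis $\vert X_1 \vert \le \vert Z_{i-1} \vert$ required by Lemma \ref{simple} persists along the entire chain, and this is immediate because the sets $Z_i$ only grow and each contains at least $\vert X_2' \vert \ge \vert X_1 \vert$ elements.
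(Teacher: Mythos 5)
Your proposal is correct and is exactly the paper's argument: the paper derives Corollary \ref{grow} by ``repeated application'' of Lemma \ref{simple}, which is precisely the one-element-at-a-time chain you spell out. Your extra care in checking that the cardinality hypothesis persists along the chain is a welcome (if routine) detail the paper leaves implicit.
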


%

\begin{theorem} \label{metric}
 $d_S(\cdot,\cdot)$ is a metric on $\mathcal{F}(X)$.
\end{theorem}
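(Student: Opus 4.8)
The plan is to verify the metric axioms one at a time; all are quick except the triangle inequality. Putting $y=x$ in (\ref{condition}) gives $M(x)\ge d(x,x)=0$, so every summand of $d_\chi(A,B)$ is non-negative and hence $d_S(A,B)\ge 0$; taking $\chi=\mathrm{id}_A$ gives $d_S(A,A)=0$; and symmetry is part of the definition (\ref{mainmetric}). If $d_S(A,B)=0$, say with $\vert A\vert\le\vert B\vert$ and $\chi$ an optimal injection, then $\sum_{x\in A}d(x,\chi(x))=0$ forces $\chi=\mathrm{id}_A$, so $A\subseteq B$, while $\sum_{y\in B\setminus A}M(y)=0$; since (\ref{condition}) gives $M(y)\ge\sup_{z\in X}d(y,z)$, which is positive for every $y$ as soon as $X$ has more than one point (the only case worth treating), we obtain $B\setminus A=\emptyset$ and hence $A=B$.

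For the triangle inequality I would first replace $d_S$ by an equivalent but manifestly symmetric quantity. Call $R\subseteq A\times B$ a \emph{partial matching} if each element of $A$ and each element of $B$ lies in at most one pair of $R$; let $\pi_1(R)\subseteq A$ and $\pi_2(R)\subseteq B$ be the sets of elements occurring in $R$, and set
\[
\rho(A,B):=\min_R\left(\sum_{(x,y)\in R}d(x,y)+\sum_{x\in A\setminus\pi_1(R)}M(x)+\sum_{y\in B\setminus\pi_2(R)}M(y)\right),
\]
the minimum being over all partial matchings $R$. Then $\rho=d_S$: an optimal injection from the smaller of $A,B$ to the larger, regarded as a partial matching, has cost exactly $d_S(A,B)$, so $\rho\le d_S$; conversely, an optimal partial matching cannot leave both some $x\in A$ and some $y\in B$ unmatched, since adjoining $(x,y)$ changes its cost by $d(x,y)-M(x)-M(y)\le 0$ by (\ref{condition}), so some optimal $R$ saturates the smaller side and therefore defines an injection from it to the other, giving $\rho\ge d_S$.

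Now let $A,B,C\in\mathcal{F}(X)$, choose optimal partial matchings $R_1$ for $(A,B)$ and $R_2$ for $(B,C)$, and form the partial matching $R:=\{(x,z):(x,y)\in R_1\text{ and }(y,z)\in R_2\text{ for some }y\in B\}$ between $A$ and $C$. Since the left-hand side of the display below is at least $\rho(A,C)=d_S(A,C)$, the triangle inequality $d_S(A,C)\le d_S(A,B)+d_S(B,C)$ will follow once we prove
\[
\sum_{(x,z)\in R}d(x,z)+\sum_{x\in A\setminus\pi_1(R)}M(x)+\sum_{z\in C\setminus\pi_2(R)}M(z)\le\rho(A,B)+\rho(B,C).
\]
To establish this one bounds each composed pair by $d(x,z)\le d(x,y)+d(y,z)$ for its unique intermediate $y\in B$, and invokes (\ref{condition}) in the form $M(x)\le d(x,y)+M(y)$ to pay for any $x\in A$ that $R_1$ matches to a $y\in B$ which $R_2$ leaves unmatched (and symmetrically for elements of $C$).

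The main obstacle is exactly this accounting. One partitions the elements of $B$ into four classes --- matched by both $R_1$ and $R_2$, by $R_1$ only, by $R_2$ only, by neither --- and must verify that every distance and every $M$-value appearing on the left of the displayed inequality is charged against a distinct, as-yet-unused portion of $\rho(A,B)+\rho(B,C)$, with no double counting (the $M$-values attached to the ``matched by neither'' class turn out to be slack). The partial-matching reformulation is what keeps this bookkeeping symmetric and removes the case analysis on $\vert A\vert,\vert B\vert,\vert C\vert$ that a direct argument with injections --- using Lemma \ref{lemma1} and Corollary \ref{grow} to normalize the chosen optimal injections --- would otherwise require.
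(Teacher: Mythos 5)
Your proof is correct, but the triangle inequality is handled by a genuinely different route than the paper's. The paper splits into three cases according to the ordering of $\vert X_1\vert,\vert X_2\vert,\vert X_3\vert$: in the first two it relabels elements so that the optimal injections align, composes them, and absorbs the leftover $M$-terms using (\ref{condition}); the third case (where the intermediate set is largest) is reduced to the first via the monotonicity statement $d_S(X_1,X_2')\le d_S(X_1,X_2)$ for $X_2'\subseteq X_2$ (Corollary \ref{grow}, which in turn requires Lemma \ref{simple}). You instead symmetrize the functional up front: your partial-matching quantity $\rho$, in which unmatched elements on \emph{either} side pay their $M$-value, agrees with $d_S$ (your argument for this is sound --- note that a partial matching leaving no cross pair unmatched automatically saturates the smaller side by a cardinality count), and composing optimal partial matchings then treats all cardinality configurations uniformly, making Lemma \ref{simple} and Corollary \ref{grow} unnecessary. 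The price is the four-way bookkeeping on $B$, which you state as a plan rather than write out; I checked that it closes: composed pairs are paid by $d(x,z)\le d(x,y)+d(y,z)$, an $x$ whose $R_1$-partner $y$ is $R_2$-unmatched is paid by $M(x)\le d(x,y)+M(y)$ with $M(y)$ drawn from the middle term of $\rho(B,C)$ (symmetrically for $C$), elements unmatched already in $R_1$ or $R_2$ are paid directly, and the $M$-values of the doubly unmatched class of $B$ are pure slack, with no term charged twice. Both your treatment and the paper's silently assume $M>0$ for the separation axiom, which holds once $X$ has at least two points (the paper's Remark \ref{remark1}); you at least flag the degenerate case.
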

\begin{proof}
For two finite sets $A$ and $B$ we denote by $\mathscr{X}(A,B)$ the set of injections $\chi: A\to B$. 
Let $X_1,X_2 \in \mathcal{F}(X)$.  By definition of $d_S(\cdot,\cdot)$  we have that $d_S(X_1,X_2) = d_S(X_2,X_1) \geq 0$. 
We show that $d_S(X_1, X_2)=0$ iff $X_1=X_2$.  We may assume that $\vert X_1\vert  \leq \vert X_2\vert $, and let $\nu  \in \mathscr{X}(X_1,X_2)$ be such that $d_S(X_1,X_2) = d_{\nu }(X_1,X_2)$. Then
$d_S(X_1,X_2) = d_{\nu }(X_1,X_2) = 0$ 
iff 
$\displaystyle\sum_{x \in X_1}d(x,\nu (x)) + \sum_{y\in X_2\setminus \nu(X_1) } M(y) = 0$ 
iff 
$d(x,\nu (x)) = 0$ for all  $x \in X_1 \text{ and } X_2= \nu(X_1) $
 iff 
 $x = \nu (x)$  for all $x \in X_1 \text{ and } \vert X_2\vert  = \vert X_1\vert $ 
 iff
  $X_1 = X_2$. 

 Thus, we need only to show that $d_S(\cdot,\cdot)$ satisfies the Triangle Inequality. 
 Let $X_1, X_2, X_3\in \mathcal{F}(X)$. 
 We will show that $d_S(X_1, X_2) \le d_S(X_1, X_3) + d_S(X_3,X_2)$ by considering various cases.
Note that we are still assuming that  $\vert X_1\vert  \leq \vert X_2\vert $, and that  $\nu  \in \mathscr{X}(X_1,X_2)$ is  such that $d_S(X_1,X_2) = d_{\nu }(X_1,X_2)$. 

\textbf{Case 1:} Suppose that $\vert X_1\vert  \leq \vert X_3\vert  \leq \vert X_2\vert $. 
 Let 
$\mu \in \mathscr{X}(X_3,X_2)$ and 
$\eta \in \mathscr{X}(X_1,X_3)$, be such that $d_S(X_3,X_2) = d_{\mu}(X_3,X_2)$ and $d_S(X_1,X_3) = d_{\eta}(X_1,X_3)$. 
We may assume that 
\begin{align*}
X_1 =& \{ x_1, \ldots, x_n \} \\
X_3 =& \{ y_1, \ldots, y_n, y_{n+1}, \ldots, y_{n+s} \} \\
X_2 =& \{ z_1, \ldots, z_n, z_{n+1}, \ldots, z_{n+s}, \ldots, z_{n+s+t} \} 
\end{align*}
 where $s,t\ge 0$ and $\mu(y_i) = z_i$ for $1\le i \le n+s$ and  
$\eta(x_i) = z_i$ for $1\le i \le n$. Then 
\begin{align*}
d_S(X_1,X_3)  =&  \sum_{i=1}^n d(x_i, y_i)  + \sum_{i=n+1}^{n+s} M(y_i) \hbox{ and } \\
d_S(X_2,X_3)  =&  \sum_{i=1}^{n+s} d(y_i, z_i)  + \sum_{i=n+s+1}^{n+s+t} M(z_i). 
\end{align*}
Let $\chi = \mu \circ \eta \in  \mathscr{X}(X_1,X_2)$.
Then
\begin{align*}
& d_S(X_1, X_2)  \\
\le & d_\chi(X_1, X_2) \\
=& \sum_{i=1}^n  d(x_i, z_i) + \sum_{i=n+1}^{n+s+t} M(z_i) \\
\le& \sum_{i=1}^n  \left [ d(x_i, y_i) + d(y_i, z_i) \right ] + \sum_{i=n+1}^{n+s+t} M(z_i) \\
= & \sum_{i=1}^n  d(x_i, y_i) +\sum_{i=1}^n  d(y_i, z_i) + \sum_{i=n+1}^{n+s+t} M(z_i) \\
=& \left ( d_S(X_1, X_3)  - \sum_{i=n+1}^{n+s} M(y_i)  \right )   + \\
 &  \left(  d_S(X_3, X_2) - \sum_{i=n+1}^{n+s} d(y_i, z_i)  - \sum_{i=n+s+1}^{n+s+t} M(z_i) \right )   +  \sum_{i=n+1}^{n+s+t} M(z_i) \\
=& d_S(X_1,X_3) + d_S(X_2, X_3)    -  \sum_{i=n+1}^{n+s} M(y_i) - \sum_{i=n+1}^{n+s} d(y_i, z_i) +
 \sum_{i=n+1}^{n+s} M(z_i) \\
=& d_S(X_1,X_3) + d_S(X_2, X_3)    + \sum_{i=n+1}^{n+s} \left ( M(z_i) - M(y_i) -  d(y_i, z_i)  \right ) \\
\le & d_S(X_1, X_3) + d_S(X_3,X_2).
\end{align*}
by condition (\ref{condition})

\textbf{Case 2:} Suppose $\vert X_3\vert  \leq \vert X_1\vert  \leq \vert X_2\vert $. Let 
$\mu \in \mathscr{X}(X_3,X_2)$ and 
$\eta \in \mathscr{X}(X_3,X_1)$ be such that $d_S(X_3,X_2) = d_{\mu}(X_3,X_2)$ and $d_S(X_1,X_3) = d_{\eta}(X_1,X_3)$. 
We may assume that 
\begin{eqnarray*}
X_3 &=& \{ x_1, \ldots, x_n \} \\
X_1 &=& \{ y_1, \ldots, y_n, y_{n+1}, \ldots, y_{n+s} \} \\
X_2 &=& \{ z_1, \ldots, z_n, z_{n+1}, \ldots, z_{n+s}, \ldots, z_{n+s+t} \} 
\end{eqnarray*}
 where $s,t\ge 0$ and $\mu(x_i) = z_i$ for $1\le i \le n$ and  
$\eta(x_i) = y_i$ for $1\le i \le n$. Then 
\begin{align*}
d_S(X_3,X_1)  =&  \sum_{i=1}^n d(x_i, y_i)  + \sum_{i=n+1}^{n+s} M(y_i) \\
d_S(X_3,X_2)  =&  \sum_{i=1}^{n} d(x_i, z_i)  + \sum_{i=n+1}^{n+s+t} M(z_i). 
\end{align*}
Define $\chi: X_1 \to X_2$ by  $\chi(y_i) = z_i$ for $i=1,2,\ldots, n+s$. 
Then
\begin{align*}
& d_S(X_1, X_2)  \\
\le & d_\chi(X_1, X_2) \\
=& \sum_{i=1}^{n+s}  d(y_i, z_i) + \sum_{i=n+s+1}^{n+s+t} M(z_i) \\
=& \sum_{i=1}^{n}  d(y_i, z_i) +\sum_{i=n+1}^{n+s}  d(y_i, z_i)+  \sum_{i=n+s+1}^{n+s+t} M(z_i)  \\
\le& \sum_{i=1}^{n}  \left [ d(y_i, x_i) + d(x_i, z_i) \right ]    +\sum_{i=n+1}^{n+s}  d(y_i, z_i)+  \sum_{i=n+s+1}^{n+s+t} M(z_i)   \\
= & \sum_{i=1}^{n} d(y_i, x_i) + \left ( \sum_{i=1}^{n}d(x_i, z_i)  +\sum_{i=n+1}^{n+s+t} M(z_i)  \right) -   \sum_{i=n+1}^{n+s} M(z_i)   +\sum_{i=n+1}^{n+s}  d(y_i, z_i)   \\
=&    \left (  d_S(X_3,X_1) - \sum_{i=n+1}^{n+s} M(y_i)  \right ) + 
d_S(X_3,X_2) + 
 \sum_{i=n+1}^{n+s}  \left ( d(y_i, z_i) - M(z_i)  \right) \\
 =&   d_S(X_3,X_1)+  d_S(X_3,X_2)   - \sum_{i=n+1}^{n+s} M(y_i) + 
 \sum_{i=n+1}^{n+s}  \left ( d(y_i, z_i) - M(z_i)  \right)   \\
\le & d_S(X_1, X_3) + d_S(X_3,X_2).
\end{align*}
where the last inequality follows from by condition (\ref{condition}).

\textbf{Case 3:} Suppose $\vert X_1\vert  \leq \vert X_2\vert  \le  \vert X_3\vert $. 

Fix a subset $X_3'$ of $X_3$ of cardinality equal to $X_2$. 
Then from Case 1, it follows that $d_S(X_1, X_2) \le d_S(X_1,X_3') + d_S(X_3', X_2)$. From Corollary \ref{grow} we know that 
$d_S(X_1, X_3') \le d_S(X_1, X_3)$ and  $ d_S(X_3', X_2) \le d_S(X_3, X_2)$. Thus
 $d_S(X_1, X_2) \le d_S(X_1,X_3) + d_S(X_3, X_2)$.

\end{proof}

\begin{remark} \label{remark1}
If $X$ contains at least two elements, then the function $M$ never takes on the value 0. In fact, there exists a constant $C>0$ such that 
$M(y) \ge C$ for all $y\in X$: from (\ref{condition}), 
$d(x,y) \le M(x) \le  d(x,y) + M(y) \le   2M(y)$. Thus $M(y) \ge M(x)/2$ for all $y\in X$.  Put $C = M(x)/2$. If $C=0$, then the inequlity 
$M(y) \ge d(y,x)$ implies that $y= x$ for all $x\in X$, contradicting that $X$ contains at least two elements. Thus $C = M(x)/2>0$ is the required constant. 
\end{remark}
\begin{remark}\label{remark2}
If  $\{ A_n \}$ is a Cauchy sequence in $\mathcal{F}(X)$, it can be shown that $\vert A_n\vert  = \vert A_m\vert $ for all $m, n$ sufficiently large: let $C$ be as in Remark \ref{remark1}. 
Then there exists $N$ such that $d_S(A_m,A_n) <  C$ for all $m, n \ge N$.  Since $ C =  \frac{1}{2} M(x)  < M(y)$ for all $y\in X$, it follows that 
$\vert A_m\vert  = \vert A_n\vert $ for all $m, n \ge N$.
\end{remark}  
\begin{remark}
If the topology induced the metric $d$ on $X$ is the discrete topology, then $\mathcal{F}(X)$ is complete with respect to  the subset metric. 
However, this is not the case in general.  Consider the case where $X = [0,1]$, $d$ is the usual Euclidean metric and $M(y) =\max\{ y, 1-y \}$. 
Put   $A_n = \{ 0, \frac{1}{n} \}$ for all $n\ge 1$. 
Then $\{A_n\}$ is Cauchy sequence that does not converge: if $\{ A_n \}$ did converge, using Lemma \ref{lemma1} and Remark \ref{remark2},
 it would converge to a set of the form 
$A = \{ 0, a\}$ for some $a\in X$. But $d_S(A_n, A) = \vert a-1/n\vert  \to 0 $ as $n\to \infty$, so $a$ must equal to $0$. But if $a=0$, then
$d_(A_n, A) = M(1/n) = 1-1/n \to 1$ as $n\to\infty$, a contradiction. \end{remark}



\end{document}